\newtheorem{theorem}{Theorem}
\newtheorem{lemma}[theorem]{Lemma}
\theoremstyle{definition}
\newtheorem*{remark}{Remark}
\newcommand{\Z}{\mathbb Z}
\newcommand{\E}{\mathbb E}
\newcommand{\PP}{\mathbb P}
\newcommand{\B}{\mathcal{B}}
\definecolor{db}{rgb}{0.1,0,0.75}
\definecolor{lm}{cmyk}{0 ,1,0,0}
\def\aligned{\mathbf{\uparrow}}
\def\recurrent{\rho_{\mathrm{rec}}}
\def\div{\mathrm{div}\,}
\def\col{\mathrm{col}}
\newcommand{\N}{\mathbb N}
\newcommand{\old}[1]{}
\title{Escape rates for rotor walks in $\mathbb{Z}^d$}
\author[1]{Laura Florescu\thanks{florescu@cims.nyu.edu}}
\author[2]{Shirshendu Ganguly\thanks{sganguly@uw.edu}}
\author[3]{Lionel Levine\thanks{\url{http://www.math.cornell.edu/\~levine}. Partially supported by NSF grant DMS-1243606.}}
\author[4]{Yuval Peres\thanks{peres@microsoft.com}}
\affil[1]{New York University}
\affil[2]{University of Washington}
\affil[3]{Cornell University}
\affil[4]{Microsoft Research} 
\date{August 2, 2013}
\begin{document}
\maketitle


\begin{abstract}
Rotor walk is a deterministic analogue of random walk. We study its recurrence and transience properties on $\Z^d$ for the initial configuration of all rotors aligned.  If $n$ particles in turn perform rotor walks starting from the origin, we show that the number that escape (i.e., never return to the origin) is of order $n$ in dimensions $d \geq 3$, and of order $n/\log n$ in dimension $2$.
\end{abstract}

\section{Introduction}
In a \emph{rotor walk} on a graph, the successive exits from each vertex follow a prescribed periodic sequence.  For instance, in the square grid $\Z^2$, successive exits could repeatedly cycle through the sequence North, East, South West.  Such walks were first studied in \cite{WLB96} as a model of mobile agents exploring a territory, and in \cite{priezzhev} as a model of self-organized criticality. 
In a lecture at Microsoft in 2003 \cite{propp}, Jim Propp proposed rotor walk as a deterministic analogue of random walk, which naturally invited the question of whether rotor walk is recurrent in dimension $2$ and transient in dimensions $3$ and higher.
One direction was settled immediately by Oded Schramm, who showed that rotor walk is ``at least as recurrent'' as random walk.  Schramm's elegant argument, which we recall below, applies to any initial rotor configuration $\rho$.  

The other direction is more subtle because it depends on $\rho$. We say that $\rho$ is \emph{recurrent} if the rotor walk started at the origin with initial configuration $\rho$ returns to the origin infinitely often; otherwise, we say that $\rho$ is \emph{transient}. 
Angel and Holroyd \cite{angel-holroyd-rec} showed that for all $d$ there exist initial rotor configurations on $\Z^d$ such that rotor walk is recurrent.  These special configurations are primed to send particles initially back toward the origin.
The purpose of this note is to analyze the case $\rho=\aligned$ when all rotors send their first particle in the same direction.  To measure how transient this configuration is, we run $n$ rotor walks starting from the origin and record whether each returns to the origin or escapes to infinity.  We show that the number of escapes is of order $n$ in dimensions $d\geq 3$, and of order $n/\log n$ in dimension $2$.

To give the formal definition of a rotor walk, write $\mathcal{E} = \{\pm e_1, \ldots, \pm e_d\}$ for the set of $2d$ cardinal directions in $\Z^d$, and let $\mathcal{C}$ be the set of cyclic permutations of $\mathcal{E}$.
A \emph{rotor mechanism} is a map $m: \Z^d \to \mathcal{C}$, and a \emph{rotor configuration} is a map $\rho: \Z^d \to \mathcal{E}$.  A \emph{rotor walk} started at $x_0$ with initial configuration $\rho$ is a sequence of vertices $x_0, x_1, \ldots \in \Z^d$ and rotor configurations $\rho=\rho_0, \rho_1, \ldots$ such that for all $n\geq 0$
	\[ x_{n+1} = x_n + \rho_n(x_n). \]
and
	\[ \rho_{n+1}(x_n) = m(x_n)(\rho_n(x_n)) \]
and $\rho_{n+1}(x) = \rho_n(x)$ for all $x \neq x_n$. 

For example in $\mathbb{Z}^2$, each rotor $\rho(x)$ points North, South,
East or West.  An example of a rotor mechanism is the permutation North $\mapsto$ East $\mapsto$ South $\mapsto$ West $\mapsto$ North at all $x \in \Z^2$.  The resulting rotor walk in $\Z^2$ has the following description: A particle repeatedly steps in the direction indicated by the rotor at its current location, and then this rotor turns $90$ degrees clockwise.
Note that this ``prospective'' convention --- move the particle before updating the rotor --- differs from the ``retrospective'' convention of past works such as \cite{angel-holroyd-rec,HLMPPW}.  In the prospective convention, $\rho(x)$ indicates where the next particle will step from $x$, instead of where the previous particle stepped.  The prospective convention is often more convenient when studying questions of recurrence and transience.

In this paper we fix once and for all a rotor mechanism $m$ on $\Z^d$. Now depending on the initial rotor configuration $\rho$, one of two things can happen to a rotor walk started from the origin:
\begin{enumerate}
\item The walk eventually returns to the origin; or
\item The walk never returns to the origin, and visits each vertex in $\Z^d$ only finitely often. 
\end{enumerate} 
Indeed, if any site were visited infinitely often, then each of its neighbors must be visited infinitely often, and so the origin itself would be visited infinitely often.  In case 2 we say that the walk ``escapes to infinity.''  Note that after the walk has either returned to the origin or escaped to infinity, the rotors are in a new configuration.

To quantify the degree of transience of an initial configuration $\rho$, consider the following experiment: 
let each of $n$ particles in turn perform rotor walk starting from the origin until either returning to the origin or escaping to infinity.  Denote by $I(\rho,n)$ the number of walks that escape to infinity.  (Importantly, we do not reset the rotors in between trials!) 

Schramm \cite{schramm} proved that for any $\rho$,
	\begin{equation} \label{schramm} \limsup_{n \to \infty} \frac{I(\rho,n)}{n} \leq \alpha_d \end{equation}
where $\alpha_d$ is the probability that simple random walk in $\Z^d$ does not return to the origin.  Our first result gives a corresponding lower bound for the initial configuration $\aligned$ in which all rotors start pointing in the same direction: $\aligned(x) = e_d$ for all $x \in \Z^d$.

\begin{theorem}
\label{thm:transient}
For the rotor walk on $\mathbb{Z}^d$ with $d\ge3$ with all rotors initially aligned $\aligned$, a positive fraction of particles escape to infinity; that is,
	 \[ \liminf_{n \to \infty} \frac{I(\aligned,n)}{n} > 0. \]
\end{theorem}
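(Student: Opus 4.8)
\emph{Proof proposal.} The plan is to encode the entire experiment in the \emph{odometer} $u_n(x)$, the total number of times a particle exits $x$ after all $n$ walks have been performed (without resetting the rotors), and to compare it with the Green's function of simple random walk. Write $I_n := I(\aligned,n)$ and let $\mathbf 0$ be the origin. Two bookkeeping facts drive everything. First, each of the $n$ particles exits $\mathbf 0$ exactly once --- on its first step --- and then halts the instant it returns, so $u_n(\mathbf 0)=n$; and since every visit to a vertex $x\neq\mathbf 0$ is immediately followed by an exit, the number of arrivals at such $x$ equals $u_n(x)$. Second, because the rotor at each vertex cycles through all $2d$ directions, the number of particles sent from $y$ to a neighbour $x$ equals $u_n(y)/(2d)$ up to a discrepancy $\epsilon_{y\to x}$ with $|\epsilon_{y\to x}|\le 1$ and $\sum_{x\sim y}\epsilon_{y\to x}=0$. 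Equating arrivals with exits at every $x\neq\mathbf 0$ and computing the net outflow at $\mathbf 0$ gives the discrete Poisson equation
\[ -\Delta u_n = I_n\,\delta_{\mathbf 0} + \eta, \qquad \eta(x):=\sum_{y\sim x}\epsilon_{y\to x}, \]
where $\Delta f(x)=\tfrac1{2d}\sum_{y\sim x}f(y)-f(x)$ is the normalized lattice Laplacian, $\eta$ is the accumulated rotor discrepancy field with $|\eta|\le 2d$, and the coefficient $I_n$ of $\delta_{\mathbf 0}$ records that the net flux of particles escaping across every sphere equals the number of escapes.

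Next I would solve this equation against the Green's function. Let $g(x)=G(\mathbf 0,x)=\sum_{k\ge0}\pr_{\mathbf 0}(X_k=x)$, which for $d\ge 3$ is finite and positive by transience, satisfies $-\Delta g=\delta_{\mathbf 0}$, decays like $|x|^{2-d}$, and has $g(\mathbf 0)=1/\alpha_d$. Subtracting $I_n g$ annihilates the point source, so $u_n-I_n g$ is the lattice potential of $\eta$; evaluating at $\mathbf 0$ (using an a priori decay bound $u_n(x)=O(I_n|x|^{2-d})$ to justify convergence) yields
\[ n \;=\; u_n(\mathbf 0) \;=\; \frac{I_n}{\alpha_d} \;+\; \sum_{x}g(x)\,\eta(x), \]
equivalently $I_n=\alpha_d\big(n-\sum_x g(x)\eta(x)\big)$. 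This dovetails with Schramm's bound \eqref{schramm}, which forces $\liminf_n\sum_x g(x)\eta(x)\ge 0$, and it reduces the theorem to a matching \emph{upper} bound on the weighted discrepancy: any estimate of the form $\sum_x g(x)\eta(x)\le(1-c)n$ with $c>0$ fixed would give $\liminf_n I_n/n\ge\alpha_d c>0$.

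The crude pointwise bound $|\eta|\le 2d$ already closes the argument in high dimensions. The field $\eta$ is supported where $u_n\gtrsim 1$, i.e.\ on a ball of radius $R_n\sim I_n^{1/(d-2)}$, and $\sum_{|x|\le R_n}g(x)\sim R_n^2\sim I_n^{2/(d-2)}$; feeding the resulting self-bounding inequality $I_n\gtrsim n-C\,I_n^{2/(d-2)}$ back into the identity gives $I_n\gtrsim n$ precisely when $2/(d-2)\le 1$, that is for $d\ge 4$. The genuine obstacle is $d=3$, where $2/(d-2)=2$ and the crude estimate degrades to the useless $I_n\gtrsim\sqrt n$. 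Here one must exploit \emph{cancellation} in $\eta$ rather than its size: since $\sum_x\eta(x)=0$ and, more sharply, the partial sums of $\eta$ over spheres measure the deviation of rotor routing from the harmonic (random-walk) flow, a summation by parts against the slowly varying weight $g$ should beat the pointwise bound. This is exactly where the aligned initial condition $\aligned$ is essential --- it keeps the rotor phases coherent so that the single-step routing discrepancies telescope rather than accumulate, in the spirit of the Holroyd--Propp bounds comparing rotor aggregation with random walk --- and it is where I expect the bulk of the work to lie. The auxiliary a priori control on the growth of $u_n$, needed to make the Green's-function identity rigorous, is a secondary but necessary technical ingredient.
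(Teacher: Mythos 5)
Your proposal has a genuine gap---in fact two. First, the case $d=3$ is simply not handled: you acknowledge that your crude bound degrades to $I_n\gtrsim\sqrt n$ there and that the needed cancellation argument is ``where the bulk of the work lies.'' Since $d=3$ is part of the theorem, the proof is incomplete on its face. Second, and more seriously, even the $d\ge4$ argument rests on a false claim. You assert that $\eta$ is supported ``where $u_n\gtrsim 1$, i.e.\ on a ball of radius $R_n\sim I_n^{1/(d-2)}$,'' and you invoke an a priori decay bound $u_n(x)=O(I_n|x|^{2-d})$. Neither can hold: each of the $I_n$ escaping particles traverses an \emph{infinite} path along which $u_n\ge 1$, so the odometer does not decay along these trajectories and the support of $u_n$ (hence of $\eta$) is an infinite set, not a finite ball. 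The sum $\sum_x g(x)\eta(x)$ is then not even obviously convergent (since $g$ is not summable on $\Z^d$), and bounding it by $\sum_{|x|\le R_n}g(x)$ is unjustified. A telling symptom: your $d\ge4$ argument never uses the hypothesis that the rotors are initially aligned, yet the conclusion is false for general initial configurations---Angel and Holroyd construct $\recurrent$ in every dimension with $I(\recurrent,n)=0$ for all $n$. For such configurations particles may travel enormous distances before returning, so the support of $\eta$ is huge even though $I_n=0$; this is exactly where your support claim collapses. Any correct proof must exploit $\aligned$ at the precise point where you instead appeal to the unproven support bound.

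The paper circumvents both problems by working in finite balls: particles are stopped on $\partial\B_r$ with $r=n^{1/(d-1)}$, so every sum is finite and the analogue of your Poisson-equation-plus-Green's-function comparison becomes rigorous (Lemmas~\ref{lemma-k}--\ref{positive}, giving $u_n^r\approx nG_r(\cdot,o)$ up to controlled errors). The alignment hypothesis then enters through a completely different, combinatorial mechanism, which is the key idea missing from your proposal: the odometer estimate shows that $\Theta(n)$ distinct columns $(x_1,\dots,x_{d-1})\times\Z$ are visited (Lemma~\ref{support3} and \eqref{manycolumns}), and since every rotor in a previously unvisited column still points in direction $e_d$, the first particle to enter each column travels straight along it to the outer boundary, producing an escape (Lemma~\ref{an<iun}). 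Combined with $u_n^r(o)=\Theta(n)$ (transience of random walk) and the abelian-property Lemmas~\ref{finiteball}--\ref{inn} that transfer finite-ball counts to $I(\aligned,\cdot)$, this yields $I(\aligned,n)=\Theta(n)$ uniformly for all $d\ge3$, with no case split and no need for the delicate cancellation estimate your outline defers.
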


One cannot hope for such a result to hold for an arbitrary $\rho$: Angel and Holroyd \cite{angel-holroyd-rec} prove that in all dimensions there exist rotor configurations $\recurrent$ such that $I(\recurrent,n)=0$ for all $n$. Reddy first proposed such a configuration in dimension $3$ on the basis of numerical simulations \cite{Reddy}.

Our next result concerns the fraction of particles that escape in dimension $2$: for any rotor configuration $\rho$ this fraction is at most $\frac{\pi/2}{\log n}$, and for the initial configuration $\aligned$ it is at least $\frac{c}{\log n}$ for some $c>0$.

\begin{theorem} 
\label{thm:logfraction}
For rotor walk in $\Z^2$ with any rotor configuration $\rho$, we have
	\[ \limsup_{n \to \infty} \frac{I(\rho,n)}{n /\log n} \leq \frac{\pi}{2}. \]
Moreover, if all rotors are initially aligned $\aligned$, then
	\[ \liminf_{n \to \infty} \frac{I(\aligned,n)}{n / \log n} >0. \] 
\end{theorem}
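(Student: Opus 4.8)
Both bounds are driven by the \emph{odometer} $u_n(x)$, the number of times the first $n$ particles (run to completion on $\Z^2$) step out of $x$, compared against the potential kernel $a$ of simple random walk, normalized so that $a(0)=0$, $a(y)=1$ for $y\sim 0$, and $a(x)=\tfrac2\pi\log|x|+O(1)$; thus $\Delta a=\delta_0$ for the averaging Laplacian $\Delta f(x)=\tfrac14\sum_{y\sim x}f(y)-f(x)$. Writing each directed edge flow as $N(x\to y)=\tfrac14 u_n(x)+\theta_{x,y}$ with $|\theta_{x,y}|<1$ and $\sum_{y\sim x}\theta_{x,y}=0$, conservation of particles gives the exact relations $u_n(0)=n$ (each particle leaves the origin exactly once before being absorbed on return or escaping) and $\Delta u_n=-I\,\delta_0+\varepsilon$, where $I=I(\aligned,n)$ and $\varepsilon(x)=-\sum_{y\sim x}\theta_{y,x}$ is a \emph{bounded discrete divergence}. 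The key consequence is that for any $f$, summation by parts yields $|\sum_x f(x)\varepsilon(x)|\le\sum_{x\sim y}|f(x)-f(y)|$, the edge total variation of $f$ --- this is the Holroyd--Propp mechanism, and it is what produces in both directions the constant coming from the random-walk escape probability $p_R=a(y)/a(\partial B_R)=\tfrac{\pi}{2\log R}(1+o(1))$, even though simple random walk itself is recurrent in $\Z^2$.

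For the upper bound, fix $R$ and absorb each particle not only on return to $0$ but also upon reaching $\partial B_R$; let $I_R$ be the number absorbed on $\partial B_R$. Re-activating these $I_R$ boundary particles and finishing the routing recovers the original process by the abelian property, and only these particles can subsequently escape to infinity, so $I(\rho,n)\le I_R$. By the Holroyd--Propp comparison with the random walk killed on $\{0\}\cup\partial B_R$, the count $I_R$ exceeds $n\,p_R$ by at most the edge total variation of the harmonic hitting function $h_R=\pr_{\cdot}[\tau_{\partial B_R}<\tau_0]$, which the gradient of $a$ bounds by $O(R)$. Hence $I(\rho,n)\le n\,p_R+O(R)$, and choosing $R=n/(\log n)^2$ makes the error $o(n/\log n)$ while keeping $\log R=(1+o(1))\log n$, giving $\limsup_n I(\rho,n)/(n/\log n)\le\tfrac\pi2$.

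For the lower bound I would use the Green representation of $u_n$ on the box $B_R$. With $\tau$ the exit time of $B_R$ and $g_R$ its Green function ($g_R(0,0)=\tfrac2\pi\log R\,(1+o(1))$),
\[
n=u_n(0)=\E_0\!\left[u_n(X_\tau)\right]+I\,g_R(0,0)-\sum_{y\in B_R}g_R(0,y)\,\varepsilon(y).
\]
The discrepancy term is controlled by summation by parts: since $\varepsilon$ is a bounded discrete divergence and $g_R(0,\cdot)$ vanishes on $\partial B_R$, its magnitude is at most the edge total variation of $g_R(0,\cdot)$, namely $O(R)$. Because $u_n\ge0$ the harmonic term $\E_0[u_n(X_\tau)]$ is nonnegative, and solving for $I$ gives
\[
I\ \ge\ \frac{\pi}{2\log R}\Bigl(n-\E_0\!\left[u_n(X_\tau)\right]-O(R)\Bigr)(1-o(1)).
\]
Thus it suffices to exhibit a radius $R$ that is polynomial in $n$ (so that $\log R=O(\log n)$ and $O(R)=o(n)$) at which the harmonic average of the odometer has dropped to at most $(1-c)n$ for some fixed $c>0$; this yields $\liminf_n I(\aligned,n)/(n/\log n)>0$.

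The production of such a radius is the main obstacle, and it is precisely here that the aligned initial condition $\aligned$ must be used. The difficulty is that bounding $\E_0[u_n(X_\tau)]$ away from $n$ at a \emph{polynomial} radius is equivalent to showing that the bulk range of the $n$ walks is polynomial in $n$ --- equivalently, that the odometer decays over a polynomial scale rather than remaining near $n$ out to an exponential radius. A configuration keeping $u_n\approx n$ over a super-polynomial range would be essentially recurrent (of the Angel--Holroyd type $\recurrent$, for which $I=0$), so this step is exactly the quantitative exclusion of self-organized recurrence for $\aligned$. I expect to handle it by a monotonicity/maximum-principle comparison of $u_n$ with the harmonic profile $n\,(a(\partial B_R)-a)/a(\partial B_R)$, using the one-sided structure that the all-aligned start imposes on the successive excursions (together with the fact, already visible for the first few particles, that the aligned rotors repeatedly feed mass outward), and by showing separately that the thin tails created by the escaping particles contribute negligibly to the Green representation after summation by parts.
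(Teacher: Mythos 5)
Your upper bound is essentially sound, and it is a legitimate variant of the paper's argument. Where you apply a Holroyd--Propp hitting-probability discrepancy bound directly to $n$ particles killed on $\{o\}\cup\partial\B_R$ (getting $I_R(\rho,n)\le n\,p_R+O(R)$, then optimizing $R=n/(\log n)^2$), the paper runs $n$ particles \emph{not} stopped at the origin, shows via the same Green-function gradient machinery that the odometer at the origin is $N=u^n_n(o)=\frac{2}{\pi}n\log n+O(n)$, and then uses the abelian property (its Lemma~\ref{inn}) to conclude $I_n(\rho,N)=n$: that is, $N$ particles yield only $n$ escapes from $\B_n$. These are roughly inverse formulations of the same estimate, both resting on the abelian property plus $L^1$ bounds on gradients of Green/harmonic functions, and both deliver the constant $\pi/2$. (In your version you should still handle the small technicality that the origin is absorbing: the comparison has to be applied after the first step, using the fact that the rotor at $o$ distributes the $n$ particles among its four neighbors up to an error of $1$ each.)

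The lower bound, however, has a genuine gap, and you have located it yourself: everything hinges on producing a polynomial radius $R$ at which $\E_o[u_n(X_\tau)]\le(1-c)n$, and for this you offer only a hoped-for ``monotonicity/maximum-principle comparison'' exploiting that aligned rotors ``feed mass outward.'' That is not an argument: the assertion that the odometer has decayed at a polynomial scale \emph{is} the quantitative transience you are trying to prove, and no comparison with a harmonic profile can establish it without an input that distinguishes $\aligned$ from the Angel--Holroyd recurrent configurations $\recurrent$ --- for which your Green identity holds verbatim with $I=0$, the boundary term absorbing all of $n$. The paper's missing idea is concrete and entirely different: it works with the experiment in which particles are \emph{not} stopped on returning to the origin but only on exiting $\B_n$, so that the odometer $u^n_n$ satisfies $\Delta u^n_n \approx -n\delta_o$ with no unknown coefficient and is explicitly comparable to $nG_n(\cdot,o)$; this forces $u^n_n>0$ on all of $\partial\B_{\beta n}$ (Lemma~\ref{support3}), hence $\Theta(n)$ distinct vertical columns are visited. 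Under $\aligned$ the \emph{first} particle to enter any column travels straight along it in direction $e_2$ (every rotor in that column sends its first particle north) and therefore escapes, so $\col(\aligned,n)=\Theta(n)$ columns produce $\Theta(n)$ escapes; the abelian property then converts this into $I(\aligned,N)\ge\col(\aligned,n)=\Theta(n)$ with $N=u^n_n(o)=\Theta(n\log n)$, which is exactly the claimed $\Theta(N/\log N)$. Without this column/straight-path mechanism, or some substitute that genuinely uses the structure of $\aligned$, your proposal does not prove the lower bound.
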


\begin{figure}[H]
\centering
\includegraphics[scale=.6]{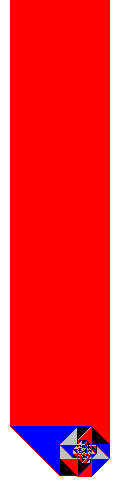}
\includegraphics[scale=.6]{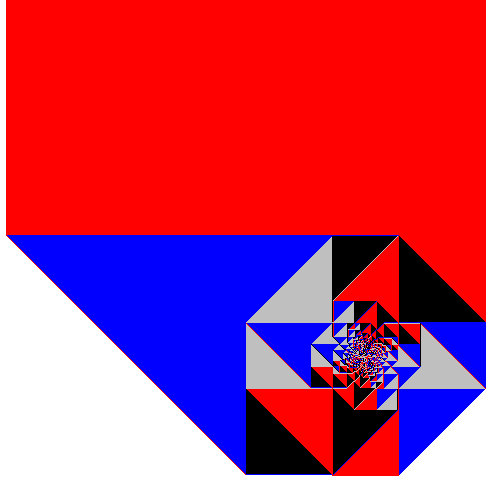}
\caption{The configuration of rotors in $\Z^2$ after $n$ particles started at the origin have escaped to infinity, with initial configuration $\uparrow$ (that is, all rotors send their first particle North).  Left: $n=100$; Right: $n=480$.  Each non-white pixel represents a point in $\Z^2$ that was visited at least once, and its color indicates the direction of its rotor.
\label{f.finalrotors}}
\end{figure}

\section{Schramm's argument}

One way to estimate the number of escapes to infinity of a rotor walk is to look at how many particles exit a large ball before returning to the origin.  Let
	\[ \mathcal{B}_r=\{x\in \mathbb{Z}^d: |x| < r\}\]
be the set of lattice points in the open ball of radius $r$ centered at the origin. Here $|x|=(x_1^2+ \cdots + x_d^2)^{1/2}$ denotes the Euclidean norm of $x$.
Consider rotor walk started from the origin and stopped on hitting the boundary
	\[ \partial\mathcal{B}_r = \{y \in \Z^d \,:\, y\notin \B_r \mbox{ and } y\sim x \mbox{ for some } x \in \B_r \}. \]
Since $\mathcal{B}_r$ is a finite connected graph, this walk stops in finitely many steps.  

Starting from initial rotor configuration $\rho$, let each of $n$ particles in turn perform rotor walk starting from the origin until either returning to the origin or exiting the ball $\B_r$.
Denote by $I_r(\rho,n)$ the number of particles that exit $\B_r$.  The next lemmas give convergence and monotonicity of this quantity.

\begin{lemma}
\label{finiteball}\cite[Lemma 18]{holroyd-propp}  For any rotor configuration $\rho$ and any $n \in \N$, we have $I_r(\rho,n) \rightarrow I(\rho,n)$ as $r \rightarrow \infty$.
\end{lemma}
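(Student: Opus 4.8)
The plan is to prove the sharper statement that $I_r(\rho,n)=I(\rho,n)$ for all sufficiently large $r$; since both sides are nonnegative integers, this is equivalent to the asserted convergence. Fix $\rho$ and $n$, and run the unrestricted experiment defining $I(\rho,n)$ to completion. Call the particles that eventually return to the origin \emph{returners} and those that escape \emph{escapers}, so there are exactly $I(\rho,n)$ escapers. Each returner has a finite trajectory, so there is a finite radius $R^*$ with every returner trajectory contained in $\B_{R^*}$; each escaper visits every vertex only finitely often and therefore leaves every ball $\B_r$.

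First I would couple the unrestricted experiment with the $\B_r$-experiment defining $I_r(\rho,n)$, processing the particles in the same order in both and writing $\rho^{(i)}$, $\rho_r^{(i)}$ for the two rotor configurations after $i$ particles. I would then try to show by induction that $\rho^{(i)}$ and $\rho_r^{(i)}$ agree on $\B_r$ \emph{as long as no escaper has re-entered $\B_r$ after first leaving it}. Once $r>R^*$ the inductive step is clear for a returner, since its walk is confined to $\B_{R^*}\subset\B_r$ and is therefore identical in both experiments, ending back at the origin and never contributing to $I_r$; and it is clear for an escaper up to the instant it first exits $\B_r$, where the $\B_r$-experiment freezes it and the unrestricted experiment lets it continue. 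If this agreement persisted, every escaper would contribute exactly once to $I_r$ and no returner would contribute, giving $I_r(\rho,n)=I(\rho,n)$ directly.

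The main obstacle is precisely the caveat in that induction: after first exiting $\B_r$ an escaper may wander back in and advance rotors inside $\B_r$, so $\rho^{(i)}$ and $\rho_r^{(i)}$ diverge there and a later particle can be routed differently in the two experiments. Such re-entries cannot be excluded at any fixed scale, so the heart of the proof is to show they do not change the \emph{count} of particles absorbed at the origin. For this I would abandon the direct coupling and work inside a single finite ball $\B_{r'}$ with $r'>r>R^*$, where the abelian property of rotor walk applies cleanly because every particle is eventually absorbed in $\{0\}\cup\partial\B_{r'}$. By the abelian property the set of particles absorbed at the origin is independent of the order of moves, so I may first route all particles with absorbing set $\{0\}\cup\partial\B_r$ (this reproduces $I_r$) and only afterward release the boundary particles and continue with absorbing set $\{0\}\cup\partial\B_{r'}$ (this reproduces $I_{r'}$); the difference $I_r-I_{r'}$ is then exactly the number of released boundary particles that return to the origin within $\B_{r'}$. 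The remaining and most delicate task is to show this difference is zero once $r>R^*$ and $r'\to\infty$: one must argue that widening the ball past all returner trajectories creates no new returns, which requires leveraging the finiteness of the unrestricted trajectories against the fact that the intermediate trajectories produced by the abelian reordering need not coincide with the unrestricted ones. Pinning down this stability, together with checking that the reordering is legal while the origin acts simultaneously as source and sink, is where the real work lies.
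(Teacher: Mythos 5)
Your proposal sets up the problem well but stops exactly where the proof has to happen: you write that showing the difference $I_r - I_{r'}$ vanishes ``is where the real work lies,'' and you give no argument for it. Worse, the reduction you settle on cannot by itself prove the lemma. Since $I_r(\rho,n)$ is nonincreasing in $r$ (your two-stage abelian argument is exactly the paper's proof of Lemma~\ref{monotone}), integer-valued and nonnegative, it is \emph{automatically} eventually constant in $r$; so ``no released boundary particle returns to $o$ once $r<r'$ are large'' is equivalent to this trivial fact and does not identify what the constant is. The entire content of the lemma is that the limit equals $I(\rho,n)$, and to get that you must at some point compare the finite-ball experiments with the unrestricted experiment --- which your second approach, working entirely inside $\B_{r'}$, never does. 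Your first approach (direct coupling) does make that comparison, but as you note it breaks down at re-entries, and you abandon it rather than repair it.

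The paper repairs exactly this point by replacing trajectory coupling with a two-sided comparison of exit counts. Let $w_n(x)$ and $w_n^r(x)$ be the number of exits from $x$ in the unrestricted and the $\B_r$-stopped experiments. First, $w_n^r \le w_n$ pointwise, for \emph{every} $r$: at a minimal time $t$ and a site $y \ne o$ where the stopped experiment would first exceed, $y$ has received at most as many particles as it ever receives in the unrestricted experiment but has emitted strictly more, forcing a negative particle count at $y$ --- contradiction. This single inequality absorbs all re-entry effects in one direction. Second, induction on $n$: fix a small ball $\B_s$; choose $S$ so that the $n$th unrestricted walk either returns to $o$ without leaving $\B_S$ or, after exiting $\B_S$, never revisits $\B_s$; then choose $r$ large enough that, by the inductive hypothesis, the first $n-1$ walks leave identical exit counts (hence identical rotors) on $\B_S$ in both experiments. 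The two $n$th walks then agree until they exit $\B_S$, and any later re-entry of the stopped walk into $\B_s$ can only \emph{increase} $w_n^r$ there; hence $w_n^r \ge w_n$ on $\B_s$, and combined with the universal inequality, $w_n^r = w_n$ on $\B_s$. Since $I_r(\rho,n)$ and $I(\rho,n)$ are determined by the exit counts at the neighbors of $o$ (the mechanism converts exit counts into counts of entries to $o$), this yields $I_r = I$ for all large $r$. The idea you were missing is that re-entries need not be ruled out; the one-sided bound $w_n^r \le w_n$ makes them harmless.
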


\begin{proof}
Let $w_n(x)$ be the number of exits from $x$ by $n$ rotor walks started at $o$ and stopped if they return to $o$.  Then $I(\rho,n)$ is determined by the values $w_n(x)$ for neighbors $x$ of $o$.

Let $w_n^r(x)$ be the number of exits from $x$ by $n$ rotor walks started at $o$ and stopped on hitting $\partial \mathcal{B}_r \cup \{o\}$. Then $I_r(\rho,n)$ is determined by the values $w_n^r(x)$ for neighbors $x$ of $o$.

We first show that $w_n^r \leq w_n$ pointwise. Let $w_n^{r,t}(y)$ be the number of exits from $y$ before time $t$ if the walks are stopped on hitting $\partial \mathcal{B}_r \cup \{o\}$.  If $w_n^r \not \leq w_n$, then choose $t$ minimal such that $w_n^{r,t} \not \leq w_n$.  Then there is a single point $y$ such that $w_n^{r,t}(y) > w_n(y)$.  Note that $y \neq o$, because $w_n^r(o) = w_n(o) = n$.
Since $w_n^{r,t}(x) \leq w_n(x)$ for all $x \neq y$, at time $t$ in the finite experiment the site $y$ has received at most as many particles as it ever receives in the infinite experiment. But $y$ has emitted strictly more particles in the finite experiment than it ever emits in the infinite experiment, so the number of particles at $y$ at time $t$ is $<0$, a contradiction.

Now we induct on $n$ to show that $w_n^r \uparrow w_n$ pointwise.  Assume that $w_{n-1}^r \uparrow w_{n-1}$.  Fix $s>0$.  There exists $R=R(s)$ such that $w_{n-1}^{r} = w_{n-1}$ on $\mathcal{B}_s$ for all $r \geq R$.
If the $n$th walk returns to $o$ then it does so without exiting $\mathcal{B}_{S}$ for some $S$; in this case $w_n^r = w_n$ on $\mathcal{B}_s$ for all $r \geq \max(R,S)$.

If the $n$th walk escapes to infinity, then there is some radius $S$ such that after exiting $\mathcal{B}_S$ the walk never returns to $\mathcal{B}_s$.  Now choose $R'$ such that $w_{n-1}^{R'} = w_{n-1}$ on $\mathcal{B}_S$.  Then we claim $w_n^r = w_n$ on $\mathcal{B}_s$ for all $r \geq R'$.  Denote by $\rho_{n-1}^r$ (resp.\ $\rho_{n-1}$) the rotor configuration after $n-1$ walks started at the origin have stopped on $\partial \mathcal{B}_r \cup \{o\}$ (resp.\ stopped if they return to $o$).
If $r \geq R'$ then the rotor walks started at $o$ with initial conditions $\rho_{n-1}^{r}$ and $\rho_{n-1}$ agree until they exit $\mathcal{B}_S$.  Thereafter the latter walk never returns to $\mathcal{B}_s$, hence $w_n^r \geq w_n$ on $\mathcal{B}_s$.  Since also $w_n^r \leq w_n$ everywhere, the inductive step is complete.
\end{proof}

\old{ 
\begin{proof}
Let $r_1$ be such that all particles which
return to the origin do not leave $\mathcal{B}_{r_1}$, and let $r_2$
be such that all particles which escape to infinity never return to
$\mathcal{B}_{r_1}$ after they leave $\mathcal{B}_{r_2}$. 
If $r \geq r_2$, then stopping particles when they hit $\partial \B_r$ has no effect on the rotors inside $\B_{r_1}$, and therefore has no effect on whether future particles return to the origin.  Hence
$I_{r}(\rho,n) = I(\rho,n)$ for all $r \geq r_2$.
\end{proof}
}

For the next lemma we recall the \emph{abelian property} of rotor walk \cite[Lemma~3.9]{HLMPPW}.  Let $A$ be a finite subset of $\Z^d$. In an experiment of the form ``run $n$ rotor walks from prescribed starting points until they exit $A$,'' suppose that we repeatedly choose a particle in $A$ and ask it to take a rotor walk step.  Regardless of our choices, all particles will exit $A$ in finitely many steps; for each $x \in A^c$, the number of particles that stop at $x$ does not depend on the choices; and for each $x \in A$, the number of times we pointed to a particle at $x$ does not depend on the choices.

\begin{lemma}
\label{monotone}\cite[Lemma 19]{holroyd-propp}
For any rotor configuration $\rho$, any $n \in \N$ and any $r<R$, we have $I_{R}(\rho,n)\le I_r(\rho,n)$.
\end{lemma}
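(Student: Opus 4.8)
The plan is to run the radius-$R$ experiment in a convenient order and invoke the abelian property recalled just above. Take $A=\B_R\setminus\{o\}$ as the region in which particles are allowed to move, so that a walk is stopped precisely when it either returns to $o$ or exits $\B_R$; then the number of particles that stop on $\partial\B_R$ is exactly $I_R(\rho,n)$. The abelian property guarantees that this count, as well as the guarantee that every particle eventually stops, is independent of the order in which we choose to advance particles. So I am free to organize the $n$ walks into two stages.

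In the first stage I would advance a particle only while it lies in the smaller punctured ball $\B_r\setminus\{o\}$, leaving untouched any particle that has reached $o$ or $\partial\B_r$. Restricted to these moves, this is exactly the abelian experiment on $\B_r\setminus\{o\}$ with $n$ walks launched from $o$; by definition it terminates with $n-I_r(\rho,n)$ particles absorbed at the origin and $I_r(\rho,n)$ particles resting on $\partial\B_r$, and it leaves the rotors inside $\B_r$ in the same configuration as the standalone radius-$r$ experiment. In the second stage I would then release these $I_r(\rho,n)$ surviving particles and let each continue until it exits $\B_R$ or returns to $o$.

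The conclusion is now immediate: the $n-I_r(\rho,n)$ particles absorbed at the origin in the first stage are permanently stopped and can never reach $\partial\B_R$, so every particle counted by $I_R(\rho,n)$ must be one of the $I_r(\rho,n)$ second-stage particles, and each of those exits $\B_R$ at most once. Hence $I_R(\rho,n)\le I_r(\rho,n)$.

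The one point that needs care, and the only place I expect real friction, is justifying that this staged schedule is a legitimate instance of the single abelian experiment on $\B_R\setminus\{o\}$, so that the final boundary tally is genuinely $I_R(\rho,n)$; this is precisely where the abelian property does the work. A minor bookkeeping check is that if $\B_r$ and $\B_R$ are close in radius, a first-stage particle may land on a site of $\partial\B_r$ lying outside $\B_R$, in which case it has already exited $\B_R$ and simply contributes once to $I_R$, which does not disturb the inequality. An alternative, more computational route would mirror the proof of Lemma~\ref{finiteball}: one shows $w_n^r\le w_n^R$ pointwise by the same minimal-time/negative-count argument, and observes that the number of particles returning to $o$ is a nondecreasing function of the exit counts $w_n(x)$ at the neighbors $x$ of $o$, so that $n$ minus this number, namely the number of escapes, is nonincreasing in the ball radius; but the two-stage argument above is shorter and I would present it as the main proof.
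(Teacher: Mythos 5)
Your two-stage argument is exactly the paper's proof: the paper also invokes the abelian property to compute $I_R(\rho,n)$ by first stopping particles on $\partial \B_r \cup \{o\}$ (yielding $I_r(\rho,n)$ particles on $\partial \B_r$) and then letting only those continue to $\partial \B_R \cup \{o\}$, so at most $I_r(\rho,n)$ can stop on $\partial \B_R$. Your extra remarks (the edge case where $\partial\B_r$ might poke outside $\B_R$, and the alternative pointwise-comparison route) are fine but not needed; the core argument matches the paper.
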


\begin{proof}
By the abelian property, we may compute $I_{R}(\rho,n)$ in two stages. First stop particles when they reach $\partial \B_r \cup \{o\}$, where $o \in \Z^d$ is the origin, and then let the $I_r(\rho,n)$ particles stopped on $\partial \B_r$ continue walking until they reach $\partial \B_{R} \cup \{o\}$.  Therefore at most $I_r(\rho,n)$ particles stop in $\partial \B_R$.
\end{proof}


Oded Schramm's upper bound \eqref{schramm} begins with the observation that if $2dm$ particles at a single site $x \in \Z^d$ each take a single rotor walk step, the result will be that $m$ particles move to each of the $2d$ neighbors of $x$.  Fix $r,m \in \N$ and consider $N = (2d)^r m$ particles at the origin.  Let each particle take a single rotor walk step.  Then repeat $r-1$ times the following operation: let each particle that is not at the origin take a single rotor walk step. 
The result is that for each path $(\gamma_0,\ldots,\gamma_\ell)$ of length $\ell \leq r$ with $\gamma_0=\gamma_\ell=o$ and $\gamma_i \neq o$ for all $1 \leq i \leq \ell-1$, exactly $(2d)^{-\ell}N$ particles traverse this path.  Denoting the set of such paths by $\Gamma(r)$ and the length of a path $\gamma$ by $|\gamma|$, the number of particles now at the origin is
	\[ N \sum_{\gamma \in \Gamma(r)} (2d)^{-|\gamma |} = N p  \]
where $p = \PP(T_o^+ \leq r)$ is the probability that simple random walk returns to the origin by time $r$.

Now letting each particle that is not at the origin continue performing rotor walk until hitting $\partial \B_r \cup \{o\}$, the number of particles that stop in $\partial \B_r$ is at most $N(1-p)$, so
	\[ \frac{I_r(\rho,N)}{N} \leq 1-p. \]
This holds for every $N$ which is an integer multiple of $(2d)^r$.  For general $n$, let $N$ be the smallest multiple of $(2d)^r$ that is $\geq n$.  Then
	\[ \frac{I_r(\rho,n)}{n} \leq \frac{I_r(\rho,N)}{N-(2d)^r} \]
The right side is at most $(1-p)(1 + 2(2d)^r/N)$, so
	\[ \limsup_{n \to \infty}  \frac{I(\rho,n)}{n} \leq \limsup_{n \to \infty} \frac{I_r(\rho,n)}{n} \leq 1-p = \PP(T_o^+ > r). \]
As $r \to \infty$ the right side converges to $\alpha_d$, completing the proof of \eqref{schramm}.

See Holroyd and Propp \cite[Theorem~10]{holroyd-propp} for an extension of
Schramm's argument to a general irreducible Markov chain with rational transition probabilities.

\section{An odometer estimate for balls in all dimensions}

To estimate $I_r(\rho,n)$, consider now a slightly different experiment.  Let each of $n$ particles started at the origin perform rotor walk until hitting $\partial \B_r$. (The difference is that we do not stop the particles on returning to the origin!) 
Define the \emph{odometer function} $u^{r}_{n}$ by
$$u^{r}_{n}(x) = \mbox {total number of exits from }x \mbox{ by }n \mbox{ rotor walks stopped on hitting }\partial\mathcal{B}_r. $$
Note that $u^r_n(x)$ counts the total number of exits (as opposed to the net number).

Now we relate the two experiments. 

\begin{lemma}
\label{inn}
For any $r>0$ and $n \in \N$ and any initial rotor configuration $\rho$, we have \[ I_r(\rho,u^r_{n}(o)) = n. \]
\end{lemma}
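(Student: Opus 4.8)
The plan is to decompose the trajectories in the odometer experiment into excursions from the origin and match them one-to-one with the particles of the $I_r$ experiment. Recall the two experiments: in the experiment defining $u^r_n$ we run $n$ particles from the origin, each walking until it hits $\partial\B_r$ (\emph{not} stopping at $o$), whereas in the experiment defining $I_r$ each particle is stopped on hitting $\partial\B_r\cup\{o\}$. I would run the odometer experiment one particle at a time, each to completion, and cut each particle's trajectory at its visits to the origin. This produces a finite sequence of \emph{excursions} $\xi_1,\dots,\xi_N$, where each $\xi_i$ is a rotor-walk path that leaves the origin and runs until it first returns to $o$ or first hits $\partial\B_r$. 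Since every exit from $o$ begins exactly one such excursion, the number of excursions equals the total number of exits from the origin, which by the abelian property does not depend on the order and hence equals $u^r_n(o)=:N$.

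Next I would count the two types of excursion. Because $\B_r$ is finite and connected, every odometer particle reaches $\partial\B_r$ in finitely many steps and is then stopped; thus the \emph{last} excursion of each of the $n$ particles hits $\partial\B_r$ (an ``escaping'' excursion), while all of its earlier excursions return to $o$ (``returning'' excursions). Consequently exactly $n$ of the $N$ excursions are escaping, and the remaining $N-n$ are returning.

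The heart of the argument is to couple the two experiments. Let $\sigma_0=\rho$, and let $\sigma_i$ be the rotor configuration after excursion $\xi_i$ in the odometer experiment; by construction $\xi_{i+1}$ is the rotor-walk excursion started from $o$ in configuration $\sigma_i$ and stopped upon return to $o$ or exit from $\B_r$. Now run the $I_r$ experiment with $N$ particles. Its $i$-th particle also starts at $o$, carries over the rotor configuration left by the previous particle, and is stopped exactly upon return to $o$ or exit from $\B_r$. Since a rotor-walk trajectory is determined by its starting vertex and the current rotor configuration, an induction on $i$ gives that the $i$-th $I_r$-particle traces precisely $\xi_i$ and ends in configuration $\sigma_i$; in particular each $I_r$-particle performs exactly one excursion, so the $N$ particles realize exactly $\xi_1,\dots,\xi_N$. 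An $I_r$-particle exits $\B_r$ if and only if its excursion is escaping, so $I_r(\rho,N)$ equals the number of escaping excursions, namely $n$. The main obstacle is precisely this synchronization: one must check that stopping a returning particle at the origin (in the $I_r$ experiment) leaves the rotors in the same state $\sigma_i$ as merely pausing the single odometer particle between its excursions --- note that an arrival at $o$ does not fire the rotor at $o$, that firing belonging to the next excursion --- so that the two experiments stay in lockstep and no off-by-one discrepancy arises in the counts $N=u^r_n(o)$ and $n$.
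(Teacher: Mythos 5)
Your proof is correct and takes essentially the same route as the paper: the paper compares the same two experiments ($n$ particles stopped only on $\partial\B_r$ versus $N=u^r_n(o)$ particles stopped on $\partial\B_r\cup\{o\}$), notes that the first has exactly $N$ exits from the origin by definition of the odometer, and concludes via the abelian property that both end with $n$ particles on $\partial\B_r$ and $N-n$ at the origin. Your excursion decomposition and deterministic sequential coupling simply unfolds, in explicit form, the instance of the abelian property that the paper invokes in one line.
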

\begin{proof}
Starting with $N = u^r_n(o)$ particles at the origin, consider the following two experiments:
\begin{enumerate}
\item Let $n$ of the particles in turn perform rotor walk until hitting $\partial \mathcal{B}_r$.  

\item Let $N$ of the particles in turn perform rotor walk until hitting $\partial \mathcal{B}_r \cup \{o\}$.
\end{enumerate}
By the definition of $u^r_n$, in the first experiment the total number of exits from the origin is exactly $N$.  Therefore the two experiments have exactly the same outcome: $n$ particles reach $\partial \B_r$ and $N-n$  remain at the origin.
%
\end{proof}

Our next task is to estimate $u_n^r$. 
 We begin by introducing some notation.
Given a function $f$ on $\mathbb{Z}^d$, its \emph{gradient} is the function on directed edges given by
$$\nabla f(x,y) :=f(y)-f(x).$$
Given a function $\kappa$ on directed edges of $\Z^d$, its \emph{divergence} is the function on vertices given by
$$\div \kappa(x):=\frac{1}{2d}\sum_{y \sim x} \kappa(x,y)$$
where the sum is over the $2d$ nearest neighbors of $x$. 
The \emph{discrete Laplacian} of $f$ is the function $$\Delta f(x):= \div (\nabla
f)(x) =\frac{1}{2d}\sum_{y \sim x}f(y)- f(x).$$

We recall some results from \cite{levine-peres}. 

\begin{lemma}\cite[Lemma 5.1]{levine-peres}
\label{lemma-k}
For a directed edge $(x,y)$ in $\mathbb{Z}^d$, denote by $\kappa(x,y)$ the net number of crossings from $x$ to $y$ by $n$ rotor walks started at the origin and stopped on exiting $\B_r$. Then
$$\nabla u_n^r(x,y)= -2d\, \kappa(x,y) + R(x,y)$$ for some edge function $R$
  satisfying $|R(x,y)| \le 4d-2$ for all edges $(x,y)$.
\end{lemma}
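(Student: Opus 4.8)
The plan is to express the directed crossing counts through each edge in terms of the total odometer, exploiting the fact that the rotor mechanism at every vertex cycles periodically through all $2d$ directions. Write $N(x,y)$ for the number of times one of the $n$ walks steps directly from $x$ to a neighbor $y$, so that $\sum_{y \sim x} N(x,y) = u_n^r(x)$ and $\kappa(x,y) = N(x,y) - N(y,x)$. Since $m(x)$ is a single cyclic permutation of all $2d$ cardinal directions, after $u_n^r(x)$ exits from $x$ each direction has been selected either $\lfloor u_n^r(x)/2d \rfloor$ or $\lceil u_n^r(x)/2d \rceil$ times, independently of the initial rotor $\rho_0(x)$. Hence $N(x,y)$ differs from the average $u_n^r(x)/2d$ by less than $1$; equivalently, the \emph{integer} $\delta(x,y) := 2d\,N(x,y) - u_n^r(x)$ lies in the range $\{-(2d-1), \ldots, 2d-1\}$.

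Next I would carry out the algebra. Multiplying $\kappa(x,y) = N(x,y) - N(y,x)$ by $2d$ and substituting $2d\,N(x,y) = u_n^r(x) + \delta(x,y)$ and $2d\,N(y,x) = u_n^r(y) + \delta(y,x)$ gives
\[ 2d\,\kappa(x,y) = u_n^r(x) - u_n^r(y) + \delta(x,y) - \delta(y,x). \]
Rearranging isolates the gradient:
\[ \nabla u_n^r(x,y) = u_n^r(y) - u_n^r(x) = -2d\,\kappa(x,y) + \bigl(\delta(x,y) - \delta(y,x)\bigr), \]
so the remainder in the statement is $R(x,y) = \delta(x,y) - \delta(y,x)$.

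Finally I would bound $R$, and this is the one place where a little care is needed to reach the sharp constant $4d-2$ rather than the naive $4d$. The point is precisely that each $\delta(x,y)$ is an integer confined to $\{-(2d-1), \ldots, 2d-1\}$, so $|\delta(x,y)| \le 2d-1$ and likewise $|\delta(y,x)| \le 2d-1$; the triangle inequality then yields $|R(x,y)| \le (2d-1) + (2d-1) = 4d-2$. I expect the main (mild) obstacle to be justifying cleanly that the per-direction exit counts are equidistributed up to $\pm 1$ regardless of $\rho_0(x)$, since this equidistribution is exactly what forces $\delta$ to be an integer of absolute value at most $2d-1$ and thereby pins down the constant; a cruder estimate using only $|N(x,y) - u_n^r(x)/2d| < 1$ would lose the sharpness.
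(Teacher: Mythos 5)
Your proof is correct, and it is essentially the standard argument: the paper itself gives no proof of this lemma, citing it from Levine--Peres (Lemma 5.1), and the proof there proceeds exactly as you do --- noting that periodicity of the rotor forces each direction to receive either $\lfloor u_n^r(x)/2d \rfloor$ or $\lceil u_n^r(x)/2d \rceil$ of the exits, so that $|2d\,N(x,y) - u_n^r(x)| \le 2d-1$, and then combining the two endpoints of the edge. Your version also makes explicit the antisymmetry $R(y,x) = -R(x,y)$, which the paper later relies on in the proof of Lemma~\ref{positive}.
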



Denote by $(X_j)_{j \geq 0}$ the simple random walk in $\Z^d$, whose increments are independent and uniformly distributed on $\mathcal{E} = \{ \pm e_1, \ldots, \pm e_d \}$.
Let $T = \min \{j \,:\, X_j \not\in \B_r\}$ be the first exit time from the ball of radius $r$.  For $x,y \in \B_r$, let
 	\[ G_r(x,y) ={\E}_x \#  \{j<T|X_j=y\} \] 
be the expected number of visits to $y$ by a simple random walk started at $x$ before time $T$.
The following well known estimates can be found in \cite[Prop.\ 1.5.9, Prop.\ 1.6.7]{lawler}: for a constant $a_d$ depending only on $d$,
\begin{equation}
\label{green-estimates}
G_r(x,o) = \begin{cases}
 a_d(|x|^{2-d} - r^{2-d}) + O (|x|^{1-d}), &d\ge 3 \\
 \frac{2}{\pi}(\log r-\log|x|) +O(|x|^{-1}), &d=2.
       \end{cases}
\end{equation}
We will also use \cite[Theorem 1.6.6]{lawler} the fact that in dimension $2$,
\begin{equation}
\label{green-origin}
G_r(o,o) = \frac{2}{\pi} \log r + O(1).
\end{equation}
(As usual, we write $f(n) = \Theta(g(n))$ (respectively, $f(n) = O(g(n))$) to mean that there is a constant $0<C<\infty$ such that $1/C < f(n)/g(n) < C$ (respectively, $f(n)/g(n)<C$) for all sufficiently large $n$.
Here and in what follows, the constants implied in $O()$ and $\Theta()$ notation depend only on the dimension~$d$.)


The next lemma bounds the $L^1$ norm of the discrete gradient of the function $G_r(x, \cdot)$. It appears in \cite[Lemma 5.6]{levine-peres} with the factor of $2$ omitted (this factor is needed for $x$ close to the origin). The proof given there actually shows the following.

\begin{lemma}
\label{greens}
Let $x \in \mathcal{B}_r$ and let $ \rho  =r+1-|x|$. Then for some $C$ depending only on $d$,
$$\sum_{y \in \mathcal{B}_r}\sum_{z\sim y} |G_r(x,y)-G_r(x,z)|\le C \rho \log \frac{2r}{\rho}.$$
\end{lemma}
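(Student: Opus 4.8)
The plan is to bound the total variation $\sum_{y\in\mathcal{B}_r}\sum_{z\sim y}|G_r(x,y)-G_r(x,z)|$ by splitting the ball into a near field and a far field around $x$ and estimating the discrete gradient $\nabla_y G_r(x,\cdot)$ in each. Recall that $G_r(x,\cdot)\ge 0$, vanishes on $\partial\mathcal{B}_r$, and is discrete-harmonic at every vertex of $\mathcal{B}_r$ except $x$, where $\Delta_y G_r(x,y)=-\delta_x(y)$. The analytic input is the standard representation $G_r(x,y)=K(x-y)-\E_x[K(X_T-y)]$, where $K$ is the whole-space kernel --- the Green's function $g$ with $g(w)=a_d|w|^{2-d}+O(|w|^{1-d})$ when $d\ge 3$, and $K=-a$ where $a(w)=\frac{2}{\pi}\log|w|+O(1)$ is the potential kernel when $d=2$ --- together with the well-known whole-space estimates $|\nabla K(w)|=O(|w|^{1-d})$ and $|\nabla^2 K(w)|=O(|w|^{-d})$ \cite{lawler}. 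The term $\E_x[K(X_T-\cdot)]$ is harmonic inside $\mathcal{B}_r$ and plays the role of a method-of-images correction.

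In the near field $\{y:|x-y|<\rho/2\}$ every vertex lies at distance $>\rho/2$ from $\partial\mathcal{B}_r$, so $|X_T-y|\ge\rho/2$ and the correction contributes a gradient of size $O(\rho^{1-d})$ at each of the $O(\rho^d)$ near-field vertices, for a total of $O(\rho)$. The singular part contributes $\sum_{z\sim y}|\nabla_y K(x-y)|=O(|x-y|^{1-d})$, and summing over dyadic shells $2^k\le|x-y|<2^{k+1}$ (each holding $O(2^{kd})$ vertices) with $2^k\le\rho$ gives the geometric sum $\sum_k O(2^{kd}\cdot 2^{k(1-d)})=\sum_k O(2^k)=O(\rho)$. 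Hence the near field contributes $O(\rho)$.

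The crux is the far field $\{y:|x-y|\ge\rho/2\}$, where I claim the uniform bulk estimate
\[
|\nabla_y G_r(x,y)|\le C\,\frac{\rho}{|x-y|^{d}},\qquad |x-y|\ge\rho/2,
\]
holds all the way up to $\partial\mathcal{B}_r$. Granting this, summation over dyadic shells $\rho/2\le|x-y|\le 2r$ (again $O(2^{kd})$ vertices per shell) yields
\[
\sum_{|x-y|\ge\rho/2}\ \sum_{z\sim y}|\nabla_y G_r(x,y)|
= O\!\Big(\rho\sum_{\rho/2\le 2^k\le 2r}1\Big)=O\!\Big(\rho\log\tfrac{2r}{\rho}\Big),
\]
which, combined with the $O(\rho)=O(\rho\log\frac{2r}{\rho})$ near-field bound (note $\log\frac{2r}{\rho}\ge\log 2$), gives the lemma. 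The decay $\rho/|x-y|^d$ is exactly the dipole field produced in the continuum by the method of images: the image $x^\ast$ of $x$ sits just outside $\partial\mathcal{B}_r$ at distance $\approx 2\rho$ from $x$, so away from this dipole the gradients of $K(x-\cdot)$ and of the correction cancel to leading order, leaving a second difference of size $|x-x^\ast|\cdot|\nabla^2 K|=O(\rho\,|x-y|^{-d})$.

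The main obstacle is proving this far-field estimate rigorously, since it rests on genuine cancellation: in the bulk $G_r(x,y)\approx a_d|x-y|^{2-d}$ is much larger than $\rho|x-y|^{1-d}$, so a Harnack-type bound $|\nabla G|=O(G/\mathrm{dist})$ is too weak, and the slow variation of $G_r$ must instead be extracted from the representation itself, comparing $\E_x[\nabla_y K(X_T-y)]$ with $\nabla_y K(x-y)$ through the whole-space second-difference estimate and the harmonic measure seen from $x$. A secondary difficulty is uniformity up to the boundary, where $G_r$ vanishes linearly, an edge $(y,z)$ with $z\notin\mathcal{B}_r$ has $|\nabla_y G_r(x,y)|=G_r(x,y)$, and the relevant gradient is the normal derivative; there the same $\rho/|x-y|^d$ bound must be checked directly against the exit distribution. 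Finally, the factor $2r/\rho$ (rather than $r/\rho$) is what keeps the bound positive in the degenerate case $\rho\approx r$, i.e.\ when $x$ is near the origin.
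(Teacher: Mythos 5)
Your high-level plan is sound, and the arithmetic surrounding it checks out: the near-field computation giving $O(\rho)$ is correct, the dyadic-shell summation of a bound $C\rho/|x-y|^d$ over $\{\,\rho/2\le|x-y|\le 2r\,\}$ does give $O(\rho\log\frac{2r}{\rho})$, and the claimed far-field estimate is in fact true. The problem is that you never prove it. You state the far-field gradient bound $|\nabla_y G_r(x,y)|\le C\rho/|x-y|^d$, support it with a method-of-images heuristic, and then write that ``the main obstacle is proving this far-field estimate rigorously.'' That estimate \emph{is} the lemma: every other step in your write-up is routine, and all of the cancellation that produces the factor $\rho$ (rather than the trivial $O(r)$ one gets from $|\nabla_y G_r(x,y)|=O(|x-y|^{1-d})$) lives in exactly the step you defer. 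The same applies to the boundary edges, where you again say the bound ``must be checked directly'' without checking it. So this is a proof outline with the crux left open, not a proof. (For comparison, the paper itself does not reprove the lemma either; it invokes the proof of Lemma 5.6 of \cite{levine-peres}, so the content of a self-contained argument is precisely the part you skipped.)

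For what it is worth, the cancellation you are worried about does not have to be extracted from the representation $G_r(x,y)=K(x-y)-\E_x[K(X_T-y)]$ by a delicate comparison of harmonic measures; it is already packaged in the standard boundary-decay bound $G_r(x,y)\le C\,\rho_x\rho_y\,|x-y|^{-d}$, where $\rho_w := r+1-|w|$ (so $\rho=\rho_x$). Granting that bound, apply the interior derivative estimate for nonnegative discrete harmonic functions, $|\nabla H(y)|\le C s^{-1}\max H$ over the lattice ball of radius $s$ about $y$, with $s=\tfrac12\min(\rho_y,|x-y|)$; since $\rho_y\le\rho_x+|x-y|\le 3|x-y|$ in your far field, both cases of the minimum yield $|\nabla_y G_r(x,y)|\le C\rho_x/|x-y|^d$, and at boundary edges the same follows because there $|\nabla_y G_r(x,y)|=G_r(x,y)\le C\rho_x\rho_y|x-y|^{-d}$ with $\rho_y=O(1)$. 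A complete blind proof along your lines therefore needs two ingredients that are absent from your write-up: the discrete boundary-decay estimate for $G_r$ (itself nontrivial, and exactly where your ``dipole'' intuition is made rigorous) and the discrete derivative estimate for harmonic functions; with those supplied, your decomposition closes the argument.
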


The next lemma is proved in the same way as the inner estimate of \cite[Theorem~1.1]{levine-peres}. 
Let $f(x) = nG_r(x,o)$.
\begin{lemma} 
\label{positive}
In $\mathbb{Z}^d$, let $x \in \mathcal{B}_r$ and $ \rho  =r+1-|x|$. Then,
$$|u^r_{n}(x)-f(x)|\le  C \rho \log\frac{2r}{\rho}+4d.$$ where $u^r_{n}$ is the odometer function for $n$ particles performing rotor walk stopped on exiting $\mathcal{B}_r$, and $C$ is the constant in Lemma~\ref{greens}.
\end{lemma}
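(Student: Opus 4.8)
The plan is to follow the inner-estimate strategy of \cite{levine-peres}. Write $h = u^r_n - f$ with $f(x) = nG_r(x,o)$, show that $h$ solves a discrete Poisson equation on $\B_r$ whose right-hand side is the divergence of the controlled edge function $R$ from Lemma~\ref{lemma-k}, and then invert the Laplacian using the Green function of the ball, controlling the resulting sum with Lemma~\ref{greens}.

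First I would compute $\Delta h$. Applying $\div$ to the identity $\nabla u^r_n(x,y) = -2d\,\kappa(x,y) + R(x,y)$ of Lemma~\ref{lemma-k} gives $\Delta u^r_n(x) = -2d\,\div\kappa(x) + \div R(x)$. The divergence $\div\kappa$ is just the net out-flux of particles: at every interior vertex other than $o$ every arriving particle eventually leaves and none is created or absorbed, so $\div\kappa = 0$ there, while at the origin the net out-flux equals the $n$ injected particles, giving $\div\kappa(x) = \frac{n}{2d}\,\mathbf 1_{\{x=o\}}$ on $\B_r$. Hence $\Delta u^r_n(x) = -n\,\mathbf 1_{\{x=o\}} + \div R(x)$. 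Since $f$ satisfies $\Delta f(x) = n\,\Delta_x G_r(x,o) = -n\,\mathbf 1_{\{x=o\}}$ on $\B_r$ with $f \equiv 0$ on $\partial\B_r$, subtracting yields $\Delta h = \div R$ on $\B_r$ and $h = 0$ on $\partial\B_r$ (both $u^r_n$ and $f$ vanish on the boundary, the former because no walk exits a stopped boundary site).

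Because $h$ vanishes on $\partial\B_r$, the unique solution of this Poisson problem is $h(x) = -\sum_{y\in\B_r} G_r(x,y)\,\div R(y) = -\frac{1}{2d}\sum_{y\in\B_r}\sum_{z\sim y} G_r(x,y)\,R(y,z)$, using $\Delta_y G_r(x,y) = -\mathbf 1_{\{y=x\}}$ and the symmetry of $G_r$. The crucial step is a summation by parts that exploits the antisymmetry of $R$ (inherited from the antisymmetry of both $\nabla u^r_n$ and $\kappa$): grouping directed edges into unordered pairs turns the double sum into $h(x) = -\frac{1}{2d}\sum_{\{y,z\}}\big(G_r(x,y)-G_r(x,z)\big)\,R(y,z)$, the sum running over edges with an endpoint in $\B_r$ (edges meeting $\partial\B_r$ are handled automatically since $G_r(x,z)=0$ for $z\in\partial\B_r$). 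This is exactly the maneuver that replaces a sum of $|R|$ over the whole ball—which would be of order its volume—by a weighted sum of gradients of $G_r(x,\cdot)$.

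The final step bounds $|R|\le 4d-2$ and invokes Lemma~\ref{greens}, namely $\sum_{y}\sum_{z\sim y}|G_r(x,y)-G_r(x,z)|\le C\rho\log\frac{2r}{\rho}$, to produce a bound of the asserted form $C\rho\log\frac{2r}{\rho}+4d$, with the edge-counting factors and the boundary terms absorbed into the constant and the additive $4d$. I expect the main obstacle to be precisely this antisymmetric summation by parts together with the careful edge bookkeeping near $\partial\B_r$: it is the cancellation provided by the antisymmetry of $R$ that keeps the error at scale $\rho\log(2r/\rho)$ rather than at the scale of the number of visited sites, so getting that step right—and first checking the flux identity for $\div\kappa$ that feeds it—is where the real content lies.
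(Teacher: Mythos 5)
Your proposal is correct and is essentially the paper's own argument: the paper likewise derives $\Delta u^r_n = -n\,\mathbf{1}_{\{x=o\}} + \div R$ from Lemma~\ref{lemma-k}, and its probabilistic step---telescoping $u(X_{k\wedge T})$ and antisymmetrizing the probabilities $p_k(y)=\mathbb{P}_x(X_{k\wedge T}=y)$---is exactly the random-walk phrasing of your Green-function inversion $h(x)=-\sum_y G_r(x,y)\,\div R(y)$ followed by summation by parts against the antisymmetric $R$, finished with Lemma~\ref{greens}. One detail to nail down, since the statement claims the literal constant $C$ of Lemma~\ref{greens} plus an additive $4d$: the edges meeting $\partial\mathcal{B}_r$ should be bounded separately via $\sum_y N(y)\,G_r(x,y)=2d$, where $N(y)$ is the number of edges joining $y$ to $\partial\mathcal{B}_r$ (this is the paper's ``contributes at most $8d^2$'' step, yielding the $4d$); if instead those edge terms are fed through Lemma~\ref{greens} as your sketch implicitly does, the unordered-versus-directed edge counting produces roughly $2C$ in place of $C$, which is harmless for every later use of the lemma but does not match its stated form.
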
 

\begin{proof}
If we consider the rotor walk stopped on exiting
$\mathcal{B}_r$, all sites that have positive odometer value have been hit by
particles. Using notation of Lemma~\ref{lemma-k}, we notice that since the net number of particles to enter a site $x\neq o$ not on the 
boundary is zero, we have $2d\, \div \kappa (x)= 0$. For the origin,
$2d\, \div \kappa (o)= n$. Also, the odometer function vanishes on
the boundary, since the boundary does not emit any particles.

Write $u = u^r_n$. Using the definition of $\kappa$ in Lemma~\ref{lemma-k}, we see that 
\begin{eqnarray}\label{eq:lap1}
\Delta u(x) &=& \div R(x), \,\,x \ne o,\\
\Delta u(o) &=& -n + \div R(o). \label{eq:lap2}
\end{eqnarray}
Then $\Delta f(x)=\, 0 $ for $x \in \mathcal{B}_r \setminus \{o\}$ and $\Delta f(o)=\, -n $ and $f$ vanishes on $\partial \mathcal{B}_r$.

Since $u(X_T)$ is equal to $0$, we have 
	\[ u(x)=\sum_{k\ge0}\E_x(u(X_{k\wedge T})- u(X_{(k+1)\wedge T})). \]
Also, since the $k^{th}$ term in the sum is zero when $T\le k$
	\[ \E_x(u(X_{k\wedge T})- u(X_{(k+1)\wedge T})|\mathcal{F}_{k\wedge T})= -\Delta u(X_{k}){1}_{\{T>k\}} \]
where $\mathcal{F}_j = \sigma(X_0,\ldots,X_j)$ is the standard filtration for the random walk.
 
Taking expectation of the conditional expectations and using (\ref{eq:lap1}) and (\ref{eq:lap2}), we get 
\begin{eqnarray*}
u(x)&=& \sum_{k\ge0}\E_x\left[1_{\{T>k\}}(n1_{\{X_k=o\}}-\div R(X_k) )\right]\\
&=& n\,{\E}_x \#  \{k<T|X_k=0\}-\sum_{k\ge0}\E_x\left[1_{\{T>k\}} \div R(X_k) \right].
\end{eqnarray*}
So,
$$
u(x)-f(x)=-\frac{1}{2d}\sum_{k\ge0}\E_x\left[1_{\{T>k\}}\sum_{z\sim X_k}\,R(X_k,z) \right].$$
 Let $N(y)$ be the number of edges joining $y$ to $\partial \mathcal{B}_r$. Since $\E_x \sum_{k\geq 0} 1_{\{T>k\}} N(X_k) = 2d$, and $|R| \leq 4d$, the terms with $z\in \partial \mathcal{B}_r$ contribute at most $8d^2$ to the sum.
Thus,
\begin{equation}
\label{uppbound}
|u(x)-f(x)|\le \frac{1}{2d} \left| \sum_{k\ge0}\E_x\left[\sum_{\substack{ y,z \in \mathcal{B}_r \\y \sim z }}1_{\{T>k\}\cap \{X_k=y\}}\,R(y,z) \right]\right|+4d.
\end{equation}

Note that for $y \in \mathcal{B}_r$ we have $\{X_k=y\}\cap \{T>k\}=\{X_{k\wedge T}=y\}$. Considering $p_k(y)=\mathbb{P}_x (X_{k\wedge T}=y)$, and noting that $R$ is antisymmetric (because of antisymmetry in Lemma~\ref{lemma-k}), we see that 
\begin{eqnarray*}\sum_{\substack{ y,z \in B_r \\y \sim z }}p_k(y)R(y,z)&= &-\sum_{\substack{ y,z \in B_r \\y \sim z }}p_k(z)R(y,z)\\
&=& \sum_{\substack{ y,z \in B_r \\y \sim z }}\frac{p_k(y)-p_k(z)}{2}R(y,z).
\end{eqnarray*}
Summing over $k$ in (\ref{uppbound}) and using the fact that $|R|\le 4d$, we conclude that 
$$|u(x)-f(x)|\le \sum_{\substack{ y,z \in B_r \\y \sim z }}  |G(x,y)-G(x,z)|+4d.$$ The result now follows from the estimate of the gradient of Green's function in Lemma~\ref{greens}.
\end{proof}

Now we make our choice of radius, $r=n^{1/(d-1)}$.
The next lemma shows that for this value of $r$, the support of the odometer function contains a large sphere.

\begin{lemma}
\label{support3}
There exists a sufficiently small $\beta>0$ depending only on $d$, such that for any initial rotor configuration and $r = n^{1/(d-1)}$ we have $u^r_n(x) > 0$ for all $x \in \partial \mathcal{B}_{\beta r}$.
\end{lemma}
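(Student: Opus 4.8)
The plan is to feed the choice $r = n^{1/(d-1)}$ into the odometer estimate of Lemma~\ref{positive} and check that on the sphere $\partial\mathcal{B}_{\beta r}$ the main term $f(x) = nG_r(x,o)$ beats the error term once $\beta$ is taken small. Lemma~\ref{positive} gives, for every vertex $x$ with $\rho = r+1-|x|$,
\[ u^r_n(x) \;\ge\; f(x) - C\rho\log\frac{2r}{\rho} - 4d, \]
so it suffices to prove $f(x) > C\rho\log\frac{2r}{\rho} + 4d$ for each $x \in \partial\mathcal{B}_{\beta r}$. Every such boundary vertex satisfies $|x| = \beta r + O(1)$, so all the estimates below are automatically uniform over the sphere.

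Next I would estimate both sides at a boundary vertex $x$, using $|x| = \beta r + O(1)$, hence $\rho = (1-\beta)r + O(1)$ and $\tfrac{2r}{\rho} = \tfrac{2}{1-\beta} + O(r^{-1})$. The error term becomes
\[ C\rho\log\frac{2r}{\rho} + 4d \;=\; C(1-\beta)\log\!\Big(\tfrac{2}{1-\beta}\Big)\, r + O(1), \]
which is $\Theta(r)$ with a leading coefficient that stays bounded (tending to $C\log 2$) as $\beta \to 0$. For the main term I would substitute $|x| = \beta r$ into the Green's function estimate \eqref{green-estimates} and use $n = r^{d-1}$, the identity furnished by $r = n^{1/(d-1)}$. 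For $d \ge 3$ this gives
\[ f(x) = n\, a_d\big((\beta r)^{2-d} - r^{2-d}\big) + O(1) = a_d(\beta^{2-d}-1)\, r + O(1), \]
and for $d = 2$ (where $r = n$) it gives $f(x) = \frac{2}{\pi}\log\frac{1}{\beta}\, r + O(1)$. Thus $f(x) = \Theta(r)$ as well, but its leading coefficient --- namely $a_d(\beta^{2-d}-1)$, which grows like $\beta^{-(d-2)}$ for $d \ge 3$, and $\frac{2}{\pi}\log\frac1\beta$ for $d=2$ --- diverges to $+\infty$ as $\beta \to 0$.

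Since both $f(x)$ and the error bound are $\Theta(r)$, the desired inequality reduces to a comparison of leading coefficients. I would fix $\beta > 0$ (depending only on $d$) small enough that
\[ a_d(\beta^{2-d}-1) > C(1-\beta)\log\frac{2}{1-\beta} \quad (d \ge 3), \qquad \frac{2}{\pi}\log\frac{1}{\beta} > C(1-\beta)\log\frac{2}{1-\beta} \quad (d = 2), \]
which is possible precisely because the left-hand sides blow up while the right-hand side stays bounded as $\beta \to 0$. With such a $\beta$ fixed, $f(x) - C\rho\log\frac{2r}{\rho} - 4d$ is a strictly positive multiple of $r$ plus an $O(1)$ error, hence positive for all sufficiently large $n$, which yields $u^r_n(x) > 0$ uniformly on $\partial\mathcal{B}_{\beta r}$.

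The computation itself is routine; the real content is the scaling, and that is the one point I would be careful about. The choice $r = n^{1/(d-1)}$ is what places the main term $f$ on the same order $\Theta(r)$ as the Lemma~\ref{positive} error, so the outcome is decided purely by constants; and the Green's function coefficient (a power law for $d \ge 3$, a logarithm for $d = 2$) diverges as $\beta \to 0$ while the error coefficient does not. Beyond getting this balance right --- and tracking the harmless $O(1)$ gap between $|x|$ and $\beta r$ at genuine boundary vertices --- there is nothing deep to overcome.
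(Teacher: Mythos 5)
Your proposal is correct and follows essentially the same route as the paper: apply Lemma~\ref{positive} on $\partial\mathcal{B}_{\beta r}$, use the Green's function estimates \eqref{green-estimates} together with the scaling identity $nr^{2-d}=r$ (resp.\ $r=n$ when $d=2$) to see that both the main term and the error are $\Theta(r)$, and then choose $\beta$ small so that the divergent coefficient $a_d(\beta^{2-d}-1)$ (resp.\ $\frac{2}{\pi}\log\frac{1}{\beta}$) beats the bounded error coefficient. The order of quantifiers (fix $\beta$ first, then take $n$ large to absorb the $\beta$-dependent $O(1)$ terms) is handled exactly as in the paper.
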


\begin{proof}
For $x \in \partial \mathcal{B}_{\beta{r}}$ we have $\beta r\le|x|\le\beta r+1$.
By Lemma~\ref{positive} we have 
	\[ |u^r_n (x) - f(x)| \leq C'(1-\beta)r \log\frac{1}{1-\beta} \]
for a constant $C'$ depending only on $d$.  To lower bound $f(x)$ we use (\ref{green-estimates}): in dimensions $d\geq 3$ we have
 \begin{align*} f(x)=nG_r(x,o) &\geq n (a_d(|x|^{2-d}-r^{2-d}) - K|x|^{1-d})  \\
 	&=  a_d (\beta^{2-d}-1)n r^{2-d} - K\beta^{1-d} \end{align*}
for a constant $K$ depending only on $d$. Since $r= n r^{2-d}$, we can take $\beta>0$ sufficiently small so that
 	\[ a_d (\beta^{2-d}-1)n r^{2-d} - K\beta^{1-d} >  2C'(1-\beta)r \log\frac{1}{1-\beta} \] 
for all sufficiently large $n$.
Hence $u^r_n(x)>0$.

In dimension $2$, we have $r=n$ and $nG_n(x,o) \geq n\frac{2}{\pi}\log \frac{1}{\beta} - \frac{K}{\beta},$ by (\ref{green-estimates}). So
for $\beta$ small enough, we have
that  \[ nG_{n}(x,o)= n\frac{2}{\pi}\log\frac{1}{\beta} - \frac{K}{\beta} >
C'(1-\beta)n \log\frac{1}{1-\beta} \] 
for all sufficiently large $n$.
Hence $u^n_n(x)>0$.
\end{proof}

Identify $\Z^d$  with $\Z^{d-1} \times \Z$ and call each set of the form $(x_1, \ldots, x_{d-1}) \times \Z$ a ``column.''  Starting $n$ particles at the origin and letting them each perform rotor walk until exiting $\mathcal{B}_r$ where $r= n^{1/(d-1)}$, let $\col (\rho,n)$ be the number of distinct columns that are visited.  
 That is, 
$$\col(\rho,n) =\#\{(x_1,\ldots,x_{d-1}): u^{r}_n(x_1,x_2,\ldots,x_d)>0 \mbox{ for some }x_d \in \Z  \}.$$ 
By Lemma~\ref{support3}, every site of $\partial \B_{\beta r}$ is visited at least once, so  
	\begin{align} \col(\rho,n) &\geq \# \{(x_1,\ldots, x_{d-1}):\,(x_1,x_2,\ldots, x_d)\in \partial \mathcal{B}_{\beta r} \mbox{ for some } x_d \in \Z\} \nonumber \\
	&\geq C(\beta r)^{d-1} = \Theta(n). \label{manycolumns} \end{align} 

All results so far have not made any assumptions on the initial configuration. The next lemma assumes the initial rotor configuration to be $\aligned$.  The important property of this initial condition for us is that the first particle to visit a given column travels straight along that column in direction $e_d$ thereafter.

\begin{figure}[H]
\centering
\includegraphics[height=.3\textheight]{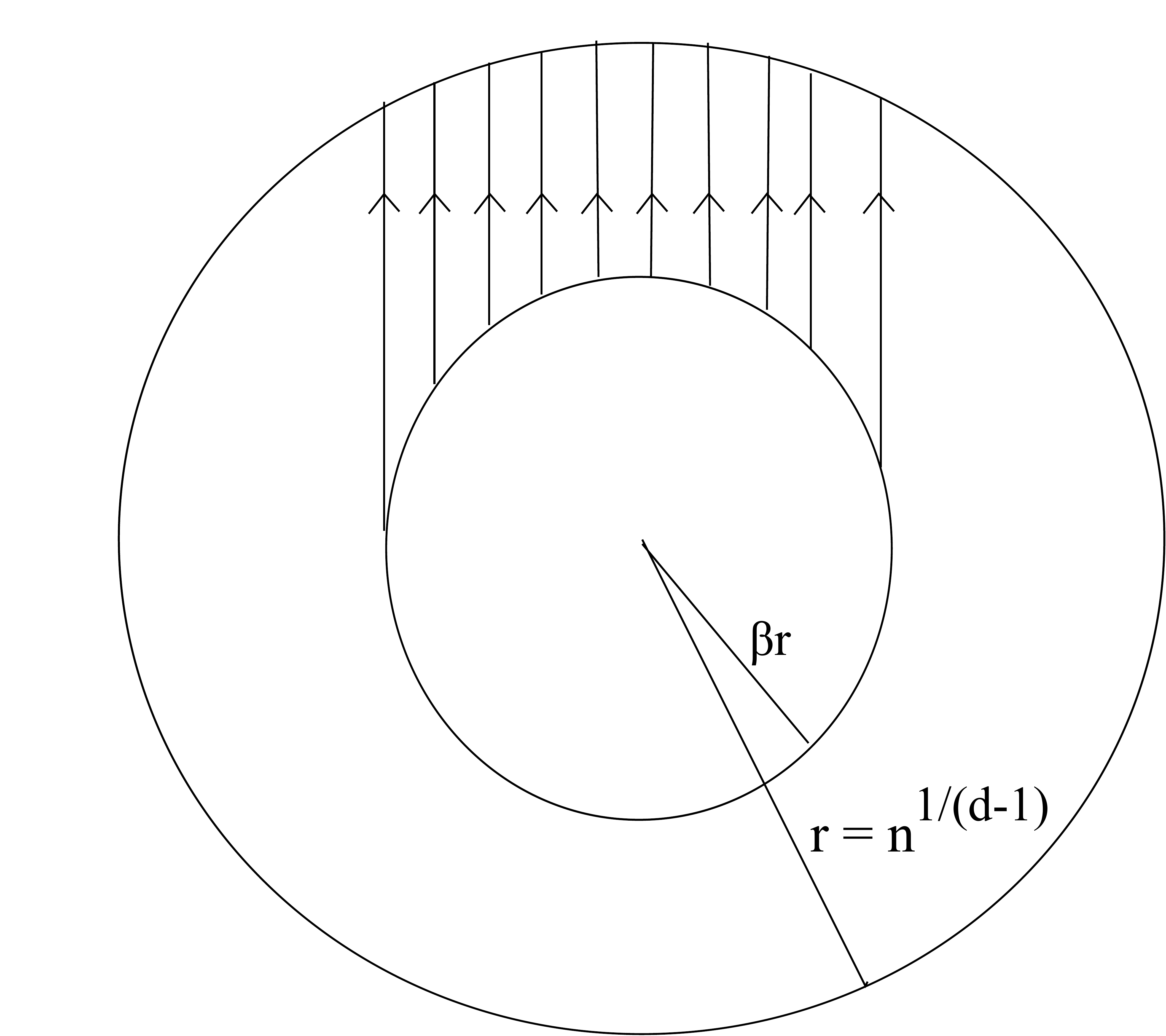}
\caption{
Diagram for the proof of Lemma~\ref{an<iun}. 
The first visit to each column results in an escape along that column, so at least $\col(\aligned,n)$ particles escape.
\label{figure2}}
\end{figure}

\begin{lemma}\label{an<iun}In $\mathbb{Z}^d$ with initial rotor configuration $\aligned$, we have \[ I_R(\aligned,u^r_n(o)) \ge \col(\aligned,n) \] 
for all $R\ge r$.
\end{lemma}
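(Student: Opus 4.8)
The plan is to run the experiment defining $I_R(\aligned,N)$ with $N:=u^r_n(o)$ particles and to exhibit an injection from the columns that get visited into the particles that escape to $\partial\B_R$. The engine is exactly the stated property of $\aligned$: the first particle ever to set foot in a given column finds all of that column's rotors still pointing in direction $e_d$, so it marches straight up the column; since its last coordinate increases monotonically, it never returns to the origin and it necessarily reaches $\partial\B_R$. The crucial gain is that this escape reaches all the way to $\partial\B_R$, not merely to $\partial\B_r$, which is what lets the argument survive the "wrong-direction" monotonicity of Lemma~\ref{monotone}.

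First I would fix the serial schedule for the $I_R(\aligned,N)$ experiment (each of the $N$ particles walks in turn until it returns to $o$ or exits $\B_R$) and define, for each visited column $c$, its first-visitor $\phi(c)$ to be the particle during whose walk a site of $c$ is first occupied. At that instant column $c$ is entirely untouched, hence entirely aligned, so $\phi(c)$ climbs straight up $c$ and exits $\B_R$; thus $\phi(c)$ is an escaping particle. The origin column is first-visited at $o$ itself at the very first launch, so the climb there simply moves away from $o$ and this case causes no trouble. The map $\phi$ is injective, because the moment a particle first-enters a fresh column it climbs out of $\B_R$ and its walk ends, so no particle can be the first-visitor of two distinct columns. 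Consequently the number of escaping particles, which is exactly $I_R(\aligned,N)$, is at least the number of columns visited in this experiment.

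It then remains to compare that column count with $\col(\aligned,n)$. By the proof of Lemma~\ref{inn}, the experiment in which $n$ particles walk to $\partial\B_r$ without stopping at $o$ has the same outcome, and in particular the same set of occupied sites, as the experiment (A) in which $N$ particles walk until hitting $\partial\B_r\cup\{o\}$; hence the columns visited in (A) are precisely the $\col(\aligned,n)$ columns counted through $u^r_n$. Both (A) and the $I_R(\aligned,N)$ experiment fire the origin exactly $N$ times, and the latter differs only in letting particles continue from $\partial\B_r$ out to $\partial\B_R$. Using the abelian property I may realize the $I_R$ experiment by first running (A) verbatim and only afterwards releasing the particles resting on $\partial\B_r$; since the support of the odometer is independent of the firing order, every column visited in (A) is visited in the $I_R$ experiment. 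Chaining the inequalities, $I_R(\aligned,N)$ is at least the number of columns visited in the $I_R$ experiment, which is at least the number visited in (A), which equals $\col(\aligned,n)$; this is the asserted bound with $N=u^r_n(o)$.

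The step I expect to be most delicate is the bookkeeping in the last paragraph: making the abelian property do exactly the right work when the two experiments carry different stopping sets ($\partial\B_r\cup\{o\}$ versus $\partial\B_R\cup\{o\}$), and confirming that ``first visit to a column'' is a well-defined global event whose visitor genuinely escapes to $\partial\B_R$ (the only real hazard being the origin column, dispatched above). The straight-up observation itself is immediate from the initial alignment; all the care lies in threading it through the abelian reductions so that the columns counted by $\col(\aligned,n)$ are exactly the ones producing distinct escapes.
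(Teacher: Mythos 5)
Your proposal is correct and takes essentially the same route as the paper: the abelian two-stage decomposition (stop particles at $\partial\B_r\cup\{o\}$, then release those on $\partial\B_r$), Lemma~\ref{inn} to identify the first stage with the experiment defining $\col(\aligned,n)$, and the key observation that the first particle to enter a column finds it fully aligned and marches straight out of $\B_R$, giving an injection from visited columns to escaping particles. Your extra bookkeeping --- performing the injection in a serial schedule and then transferring the column count between schedules via schedule-independence of the odometer support --- merely makes explicit a scheduling point that the paper's terser proof leaves implicit.
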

\begin{proof}
By the abelian property of rotor walk, we may compute $I_R(\rho,{u^r_n(o)})$ in two stages. First we stop the particles when they first hit $\partial \mathcal{B}_r \cup \{o\}$.  Then we let all the particles stopped on $\partial \mathcal{B}_r$ continue walking until they hit $\partial \mathcal{B}_R \cup \{o\}$.  By Lemma~\ref{inn}, exactly $n$ particles stop on $\partial \B_r$ during the first stage, and therefore $\col(\aligned,n)$ distinct columns are visited during the first stage.
 Because the initial rotors are $\aligned$, the first particle to visit a given column travels straight along that column to hit $\partial \B_R$ (Figure~\ref{figure2}).  Therefore the number of particles stopping in $\partial \B_R$ is at least $\col(\aligned,n)$.
\end{proof}

\section{The transient case: Proof of Theorem~\ref{thm:transient}}
\label{sec:transient}

In this section we consider $\mathbb{Z}^d$ for $d\ge 3$. 
We will prove Theorem~\ref{thm:transient} by comparing the number of escapes $I(\aligned,n)$ with $\col(\aligned,n)$.

Let $r=n^{1/(d-1)}$ and $N = u_n^r(o)$. By the transience of simple random walk in $\Z^d$ for $d\geq 3$ we have 
	\[ f(o) = nG_r(o,o) = \Theta(n). \]
By Lemma~\ref{positive} we have $|N-f(o)| = O(r)$ and hence $N=\Theta(n)$.
By Lemmas~\ref{finiteball} and~\ref{an<iun} we have $I(\aligned,N) \geq \col(\aligned, n)$.
Recalling \eqref{manycolumns} that $\col(\aligned,n) = \Theta(n)$ and that $I(\aligned,n)$ is nondecreasing in $n$, we conclude that there is a constant $c>0$ depending only on $d$ such that for all sufficiently large $n$
\[ \frac{I(\aligned,n)}{n}>c \]
which completes the proof.

\section{The recurrent case: Proof of Theorem~\texorpdfstring{\ref{thm:logfraction}}{2}}

In this section we work in $\Z^2$ and take $r=n$.
We start by estimating the odometer function at the origin for the rotor walk stopped on exiting $\mathcal{B}_n$.

\begin{lemma}
\label{greenorigin}
For any initial rotor configuration in $\Z^2$ we have
	\[ u_n^n(o) = \frac{2}{\pi} n \log n + O(n). \]
\end{lemma}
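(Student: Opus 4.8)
The plan is to apply the odometer estimate of Lemma~\ref{positive} at the single point $x = o$ and combine it with the known value of the Green's function at the origin from \eqref{green-origin}. First I would retain the notation $f(x) = n G_n(x,o)$ and evaluate $f$ at the origin. Since $r = n$, equation \eqref{green-origin} gives $f(o) = n G_n(o,o) = n\bigl(\tfrac{2}{\pi}\log n + O(1)\bigr) = \tfrac{2}{\pi} n \log n + O(n)$. Thus the entire main term of the claim already comes from $f(o)$, and it remains only to show that $u_n^n(o)$ agrees with $f(o)$ up to an additive error of order $n$.

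Next I would invoke Lemma~\ref{positive} with $x = o$. Here $|x| = 0$, so $\rho = r + 1 - |x| = n + 1$, and since $d=2$ the lemma yields $|u_n^n(o) - f(o)| \le C(n+1)\log\frac{2n}{n+1} + 8$. The one point that genuinely requires care --- and the only place a careless estimate could go wrong --- is the size of the error term $\rho \log(2r/\rho)$ at the origin. For a generic interior point this quantity can be as large as $\Theta(r\log r)$, which would swamp the main term; but at the origin $\rho = n+1 \approx r$, so the ratio $2r/\rho = 2n/(n+1)$ lies between $1$ and $2$ and its logarithm is bounded. Hence $C(n+1)\log\frac{2n}{n+1} = O(n)$, giving an error of the desired order $O(n)$ rather than $O(n\log n)$.

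Finally I would simply combine the two estimates: $u_n^n(o) = f(o) + O(n) = \tfrac{2}{\pi} n \log n + O(n)$, which is exactly the assertion. The argument is short, and the \emph{only} obstacle, such as it is, is recognizing that the generic odometer error bound of Lemma~\ref{positive} degenerates to the favorable order $O(n)$ precisely because the origin sits at distance of order $r$ from the boundary, so that $2r/\rho$ stays bounded.
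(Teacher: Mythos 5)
Your proof is correct and takes essentially the same route as the paper's: evaluate $f(o) = nG_n(o,o)$ via \eqref{green-origin} and apply Lemma~\ref{positive} at $x=o$, where $\rho = n+1$ makes $\log(2r/\rho)$ bounded and hence the error $O(n)$. One motivational aside is inaccurate --- the bound $\rho\log(2r/\rho)$ is maximized at $\rho = 2r/e$ with value $2r/e = O(r)$, so it is never of order $r\log r$ at any point of $\mathcal{B}_r$ --- but this does not affect the validity of your argument.
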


\begin{proof}
By \eqref{green-origin}, we have 
$f(o)= nG_{n}(o,o) = n (\frac{2}{\pi} \log n + O(1))$, and $|u^n_n(o)-f(o)| = O(n)$ by Lemma~\ref{positive}.
\end{proof}

Turning to the proof of the upper bound in Theorem~\ref{thm:logfraction}, let $N=u_n^n(o)$.
By Lemmas \ref{finiteball} and \ref{monotone}, $I(\rho,N) \le I_n(\rho,N)$.  By Lemma~\ref{inn}, $I_n(\rho,N)=n$.  Now by Lemma~\ref{greenorigin}, $\frac{N}{\log N} = \frac{(2/\pi) n \log n +O(n)}{\log n + O(\log \log n)} = (\frac{2}{\pi} + o(1))n$, hence
	\[ \frac{I(\rho,N)}{N / \log N} \leq \frac{n}{(\frac{2}{\pi}+o(1))n} = \frac{\pi}{2} + o(1). \]
Since $I(\rho,n)$ is nondecreasing in $n$, the desired upper bound follows.

To show the lower bound for $\aligned$ we use lemmas \ref{finiteball} and \ref{an<iun} along with \eqref{manycolumns}  
\[ I{(\aligned,N)} =  \lim_{R\rightarrow \infty}I_R(\aligned,N) \geq  \col(\aligned,n) \geq \beta n = \Theta(\frac{N}{\log N}). \]
Since $I(\rho,n)$ is nondecreasing in $n$ the desired lower bound follows.

\begin{remark}
The proofs of the lower bounds in Theorems~\ref{thm:transient} and~\ref{thm:logfraction} apply to a slightly more general class of rotor configurations than $\aligned$. 
Given a rotor configuration $\rho$, the \emph{forward path} from $x$ is the path $x=x_0, x_1, x_2, \ldots$ defined by $x_{k+1} = x_k + \rho(x_k)$ for $k\geq 0$.  Let us say that $x \in \partial \B_r$ has a \emph{simple path to infinity} if the forward path from $x$ is simple (that is, all $x_k$ are distinct) and $x_k \notin \partial \B_r$ for all $k\geq 1$.
The proofs we have given for~$\aligned$ remain valid for~$\rho$ as long as there is a constant $C$ and a sequence of radii $r_1, r_2, \ldots$ with $r_{i+1}/r_i < C$, such that for each $i$, at least $r_{i}^{d-1}/C$ sites on $\partial \B_{r_{i}}$ have disjoint simple paths to infinity.  For instance, the rotor configuration
	\[ \rho(x) = \begin{cases} \alpha, &x_d \geq 0 \\
						\beta, &x_d <0 \end{cases} \]
satisfies this condition as long as $(\alpha, \beta) \neq (-e_d,+e_d)$.
\end{remark}

\section{Some open questions}

We conclude with a few natural questions. 

\begin{itemize}
\item When is Schramm's bound attained?  In $\Z^d$ for $d\geq 3$ with rotors initially aligned in one direction, is the escape rate for rotor walk asymptotically equal to the escape probability of the simple random walk?  Theorem~\ref{thm:transient} shows that the escape rate is positive.

\item If random walk on a graph is transient, must there be a rotor configuration $\rho$ for which a positive fraction of particles escape to infinity, that is, $\liminf_{n \to \infty} \frac{I(\rho,n)}{n} >0$?

\item Let us choose initial rotors $\rho(x)$ for $x \in \Z^d$ independently and uniformly at random from $\{\pm e_1, \ldots, \pm e_d\}$.
Is the resulting rotor walk recurrent in dimension $2$ and transient in dimensions $d\geq 3$?  Angel and Holroyd \cite[Corollary 6]{angel-holroyd-rec} prove that two initial configurations differing in only a finite number of rotors are either both recurrent or both transient. Hence the set of recurrent $\rho$ is a tail event and consequently has probability $0$ or $1$.

\item Starting from initial rotor configuration $\aligned$ in $\Z^2$, let $\rho_n$ be the rotor configuration after $n$ particles have escaped to infinity.  Does $\rho_n(nx,ny)$ have a limit as $n \to \infty$?  Figure~\ref{f.finalrotors} suggests that the answer is yes.

\item Consider rotor walk in $\Z^2$ with a drift to the north: each rotor mechanism is period $5$ with successive exits cycling through North, North, East, South, West.  Is this walk transient for all initial rotor configurations?
\end{itemize}

Angel and Holroyd resolved many of these questions when $\Z^d$ is replaced by an arbitrary rooted tree: if only finitely many rotors start pointing toward the root (recall we use the prospective convention), then the escape rate for rotor walk started at the root equals the escape probability $\mathcal{E}$ for random walk started at the root \cite[Theorem~3]{angel-holroyd-trees}.  On the other hand if \emph{all} rotors start pointing toward the root, then the rotor walk is recurrent \cite[Theorem 2(iii)]{angel-holroyd-trees}.  On the regular $b$-ary tree, the i.i.d.\ uniformly random initial rotor configuration has escape rate $\mathcal{E}=1/b$ for $b\geq 3$ but is recurrent for $b=2$ \cite[Theorem 6]{angel-holroyd-trees}. In the latter case particles travel extremely far \cite[Theorem~7]{angel-holroyd-trees}: There is a constant $c>0$ such that with probability tending to $1$ as $n \to \infty$, one of the first $n$ particles reaches distance $e^{e^{cn}}$ from the root before returning! 

\section*{Acknowledgment}
This work was initiated while the first three authors were visiting the Theory Group at Microsoft Research Redmond. 

 \bibliographystyle{plain}
  \bibliography{escape-rates-rotor-walks}

\end{document}